\documentclass[12pt]{article}

\usepackage{amsfonts}

\def\be{\begin{equation}}
\def\ee{\end{equation}}
\def\l{\langle}
\def\r{\rangle}
\def\p{\parallel}
\def\R{I\!\!R}
\def\N{I\!\!N}

\def\bea{\begin{eqnarray}}
\def\eea{\end{eqnarray}}

\def\cR{{\cal R}}
\def\cN{{\cal N}}

\def\12{\frac{1}{2}}

\newtheorem{remark}{Remark}

\newtheorem{definition}{Definition}
\newtheorem{proposition}{Proposition}

\newenvironment{proof}{\noindent\textbf{Proof.}
  }{\hspace*{\fill}$\spadesuit$ \\[2mm]}

\begin{document}
\begin{center}
{\Large \bf A note on Kaczmarz algorithm  with remotest set control sequence}

\vspace*{1cm}
{\small CONSTANTIN POPA}

\vspace*{1cm}
Ovidius University of Constanta, Blvd. Mamaia 124, Constanta 900527, Romania; {\tt cpopa@univ-ovidius.ro}

\vspace*{0.2cm}
              ``Gheorghe Mihoc - Caius Iacob'' Institute of Statistical Mathematics and Applied Mathematics of the Romanian Academy, Calea 13 Septembrie, Nr. 13, Bucharest 050711, Romania\\
\end{center}

{\bf Abstract.} 
In this paper we analyse the Kaczmarz projection algorithm with remotest set control of projection indices. According to this procedure, in each iteration the projection index is one which gives the maximal absolute value of the corresponding residual. We prove that for underdetrmined full row rank systems and under some assumptions valid for problems arising in algebraic reconstruction of images in computerized tomography, this selection procedure has the property that each row index is selected at least once during the Kaczmarz algorithm iterations.

{\bf Keywords:} {Kaczmarz algorithm;  remotest set control}

{\bf MSC (2010):} {65F10;  65F20}

\section{Introduction}
\label{intro}

For $A$  an $m \times n$ (real) matrix $A$ and  $b \in \R^m$ in this paper we will consider the (consistent) system of linear equations 
\begin{equation}
\label{1}
 Ax = b,
\end{equation}
and denote by  $S(A; b)$ the set of its solutions and by $x_{LS}$ the minimal (Euclidean)  norm one. We  will use the notations $A^T, A_i, A^j, \cR(A), \cN(A)$, ${rank}(A)$, and $P_V$  for the transpose, $i$-th row, $j$-th column, range and null space of $A$, the rank of A, and the projection onto a nonempty closed convex set  $V$.  We know that
\be
\label{1-1}
\cR(A^T) = {sp}{\{A_1, A_2, \dots, A_m\}}, ~x_{LS} \in \cR(A^T).
\ee
Also $\left\langle \cdot, \cdot \right\rangle$ and $\parallel \cdot \parallel$ will denote the Euclidean scalar product and norm and all the vectors appearing in the paper will be considered as column vectors. If $H_i = \left\{x \in \R^n, \left\langle x, A_i \right\rangle = b_i \right\}$ is the hyperplane determined by the $i$-th equation of the  system (\ref{1}) we have
\begin{equation}\label{res:PHi}
P_{H_i}(x) = x - \frac{\left\langle x, A_i \right\rangle - b_i}{\left\Vert A_i \right\Vert ^2}A_i.
\end{equation}
 The Kaczmarz algorithm with single projection (for short Kaczmarz) is the following.\\
{\bf Algorithm Kaczmarz}\\
{\it Initialization:} $x^0 \in \R^n$ \\
{\it Iterative step:} for $k = 0, 1, \dots$ select $i_k \in \{ 1, 2, \dots, m \}$ and compute $x^{k+1}$ as 
        \be
        \label{ksp}
        x^{k+1}= x^{k} -  \frac{\l x^{k}, A_{i_k} \r -  b_{i_k}}{\p A_{i_k} \p^2} A_{i_k}.
        \ee
 For an almost complete overview on the selection procedures in Kaczmarz algorithm  see \cite{ycrow}, \cite{censta} (section 5.1), \cite{comb}, \cite{ccp11} and references therein. But, an important problem when considering a selection procedure seems to be the following: {\it ``sooner or latter'' during the iterations each (row) projection index $i_k$ must appear}. This was clearly formulated in \cite{ccp11} as follows ($\N$ will denote the set of natural numbers $\{ 0, 1, 2, \dots, \}$).
\begin{definition}
\label{def1}
Given a monotonically increasing sequence $\{\tau_{k}\}_{k=0}^{\infty} \subset \N$, a mapping $i: \N \rightarrow \{1, 2, \dots, m\}$ is called a {\em control with respect to the sequence $\{\tau_{k}\}_{k=0}^{\infty}$} if it defines a {\em  sequence $\{i(t)\}_{t=0}^{\infty}$}, such that for all $k \geq 0$,
\begin{equation}\label{def:inclusion}
\{1, 2, \dots, m\} \subseteq \{i(\tau_k), i(\tau_k + 1), \dots, i(\tau_{k + 1} - 1)\}.
\end{equation}
The set $\tau_k, \tau_k + 1, \dots, \tau_{k+1} - 1$ is called the $k$-th {\em window} (with respect to the given sequence $\{\tau_{k}\}_{k=0}^{\infty}$) and  $C_k = \tau_{k + 1} - \tau_k$ its {\em lenght}. If the sequence of lenghts  $(C_k)_{k \geq 0}$ is bounded the control $\{i(t)\}_{t=0}^{\infty}$ itself is called {\em bounded}.  If the sequence of lenghts  $(C_k)_{k \geq 0}$ is unbounded the control $\{i(t)\}_{t=0}^{\infty}$ itself is called an {\em expanding control}. 
\end{definition}
In the same paper \cite{ccp11} there are defined different types of bounded and expanding control sequences. But, there are also other types of control sequences which are not included in the above definition. Two well-known such examples are the {\it random control} and {\it remotest set control} (called in the present paper {\em Maximal Residual control} (MR, for short). 
\begin{itemize}
\item {\bf Maximal Residual control:} Select $i_{k} \in \{ 1, 2, \dots, m \}$ such that
\be\label{MR}
|\l A_{i_k}, x^{k-1} \r - b_{i_k}| = \max_{1 \leq i \leq m} |\l A_{i}, x^{k-1} \r - b_{i}|.
\ee

\item {\bf Random control:} Let the set $\Delta_m \subset \R^m$ be defined by  
\be
\label{0010}
\Delta_m = \{ x \in \R^m, x \geq 0, \sum_{i=1}^m = 1 \},
\ee
define the discrete probability distribution 
\begin{equation}\label{RK}
p \in \Delta_{m},\ p_{i} = \frac{\|A_{i}\|^{2}}{\|A\|^{2}_{F}}, 
\ i = 1, \dots, m,
\end{equation}
and select $i_k \in \{ 1, 2, \dots, m \}$
\be\label{RK1}
i_{k} \sim p .
\ee 
\end{itemize}
At least related to author's knowledge, there are no results saying that the above two control sequences satisfy the previously mentioned property, i.e.  ``sooner or latter'' during the iterations of Kaczmarz algorithm (\ref{ksp}) with that specific choice of the control sequence, each (row) projection index $i_k$ must appear. More clear, we formulate this property as follows: ``
Determine appropriate assumptions on (\ref{1}) such that 
\begin{equation}
\label{cond}
\forall i\in \{1, 2, \dots, m\}, ~\exists ~k ~\geq 0,~{\rm with}~ ~i_k = i.
\end{equation}
In the rest of the paper we will analyse this property for the Kaczmarz algorithm with Maximal Residual control sequence (MRK, for short) and show that it  exists a case in which the property (\ref{cond}) can be theoretically proved.
\section{Algorithm MRK}
\label{secMR}

We consider in this section Kaczmarz algorithm (\ref{ksp}) in which the Maximal Residual  control procedure is used for selecting the projection indices in each iteration (caled MRK algorithm).
\newpage
{\bf Algorithm MRK}\\

{\it Initialization.}  $x^0 \in \R^n$; \\

{\it Iterative step.} Select $i_k \in \{1, \dots, m \}$ such that
\be
\label{2}
|r_{i_k}| =  \max_{1\leq i\leq m}{|r_i|}, ~{\rm where}~ r=Ax^k-b \in \R^m,
\ee
and perform the projection
\begin{equation}
\label{3}
   x^{k+1}  = P_{H_{i_k}}(x^k), \forall k \geq 0. 
\end{equation}
The following result gives us a sufficient condition such that the property (\ref{cond}) holds.
\begin{proposition}
\label{prop1}
Let $m \leq n$ and suppose that
\begin{equation}
\label{4}
{rank}{A} = m,
\end{equation}
and
\be
\label{3-3p}
P_{\cR(A^T)}(x^0)-x_{LS} = \sum_{i = 1}^m \gamma_i A_i.
\ee
If
\be
\label{3-3pp}
\gamma_i > 0, \forall i=1, \dots, m,
\ee
then (\ref{cond}) is true for the MRK algorithm.
\end{proposition}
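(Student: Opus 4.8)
The plan is to push everything onto the component of the iterates lying in $\cR(A^T)$ and then to track how that component evolves in the basis furnished by the rows of $A$. First I would exploit consistency: since $b_i = \l A_i, x_{LS} \r$, the residual rewrites as $r_i^k = \l A_i, x^k - x_{LS}\r$, and because $A_i \in \cR(A^T)$ is orthogonal to $\cN(A)$ while $x_{LS} \in \cR(A^T)$ by (\ref{1-1}), this equals $\l A_i, d^k\r$ where $d^k := P_{\cR(A^T)}(x^k) - x_{LS} \in \cR(A^T)$. Each step (\ref{3}) adds to $x^k$ a multiple of $A_{i_k} \in \cR(A^T)$, so the $\cN(A)$-component of $x^k$ is frozen and the whole motion is carried by $d^k$, obeying $d^{k+1} = d^k - \frac{r_{i_k}^k}{\p A_{i_k}\p^2}A_{i_k}$, with $d^0 = \sum_{i=1}^m \gamma_i A_i$ by (\ref{3-3p}).

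Next I would use the full row rank hypothesis (\ref{4}): the rows $A_1,\dots,A_m$ are linearly independent and form a basis of the $m$-dimensional space $\cR(A^T)$, so each $d^k$ has a unique expansion $d^k = \sum_{i=1}^m \gamma_i^k A_i$ with $\gamma_i^0 = \gamma_i$. The key structural remark is that one step alters only the coefficient of the selected index: from $d^{k+1} = \sum_{i \ne i_k}\gamma_i^k A_i + \bigl(\gamma_{i_k}^k - r_{i_k}^k/\p A_{i_k}\p^2\bigr)A_{i_k}$ and uniqueness of the expansion, $\gamma_j^{k+1} = \gamma_j^k$ for every $j \ne i_k$. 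Hence if an index $\ell$ were \emph{never} selected, its coefficient would remain frozen, $\gamma_\ell^k = \gamma_\ell$ for all $k \ge 0$.

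Then I would establish the convergence $\gamma^k \to 0$. Expanding $\p d^{k+1}\p^2$ and using $\l d^k, A_{i_k}\r = r_{i_k}^k$ gives the monotonicity identity
\be \p d^{k+1}\p^2 = \p d^k\p^2 - \frac{(r_{i_k}^k)^2}{\p A_{i_k}\p^2}. \ee
Thus $(\p d^k\p)_k$ is nonincreasing and $\sum_k (r_{i_k}^k)^2/\p A_{i_k}\p^2 < \infty$, so $r_{i_k}^k \to 0$. By the Maximal Residual rule (\ref{2}), $|r_{i_k}^k| = \max_{1\le i\le m}|r_i^k|$, whence \emph{every} residual tends to zero, i.e. $r^k \to 0$ in $\R^m$. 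Writing $r^k = G\gamma^k$ with the Gram matrix $G = AA^T$, which is invertible precisely because ${rank}(A) = m$, I conclude $\gamma^k = G^{-1}r^k \to 0$.

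Finally I would close by contradiction: if some $\ell$ is never selected, then $\gamma_\ell^k = \gamma_\ell > 0$ for all $k$ by the freezing property together with (\ref{3-3pp}), contradicting $\gamma_\ell^k \to 0$; hence every index is selected at least once, which is (\ref{cond}). I expect the main obstacle to be the convergence step $\gamma^k \to 0$: it requires combining the monotone decrease of $\p d^k\p$ with the Maximal Residual rule to force \emph{all} residuals (not merely the selected one) to vanish, and then inverting the Gram matrix, the point where (\ref{4}) is indispensable. Note that the positivity in (\ref{3-3pp}) enters the final contradiction only through $\gamma_\ell \ne 0$.
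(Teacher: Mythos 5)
Your proof is correct, but it takes a genuinely more self-contained route than the paper. The paper also argues by contradiction: if $i_0$ is never selected, all iterates lie in the closed affine set $x^0+{\rm span}(A_i,\, i\neq i_0)$; it then invokes Ansorge's convergence theorem \cite{a84} for MRK on consistent systems to conclude $x^k \to P_{\cN(A)}(x^0)+x_{LS}$, so that $x_{LS}-P_{\cR(A^T)}(x^0)\in {\rm span}(A_i,\, i\neq i_0)$, which contradicts (\ref{3-3pp}) via uniqueness of the expansion in the linearly independent rows. Your coefficient-freezing observation ($\gamma_j^{k+1}=\gamma_j^k$ for $j\neq i_k$) is exactly that confinement statement read in the row basis, so the overall skeleton is shared; what you do differently is replace the citation by a direct convergence proof: the Pythagoras identity $\p d^{k+1}\p^2=\p d^k\p^2-(r^k_{i_k})^2/\p A_{i_k}\p^2$, summability, the MR rule promoting decay of the \emph{selected} residual to decay of \emph{all} residuals, and invertibility of the Gram matrix $AA^T$ (this is precisely where (\ref{4}) enters) to get $\gamma^k=(AA^T)^{-1}r^k\to 0$. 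This buys a self-contained, elementary argument that makes visible where each hypothesis acts, at the cost of re-deriving the relevant part of \cite{a84}; and your closing remark is equally true of the paper's proof: only $\gamma_i\neq 0$ is needed here, positivity being what Proposition \ref{pnew} actually supplies. Two cosmetic points to make your version airtight: the step from summability to $r^k_{i_k}\to 0$ uses $\p A_{i_k}\p^2\leq \max_i\p A_i\p^2<\infty$ (finitely many rows, all nonzero by (\ref{4})), and the identity $b_i=\l A_i,x_{LS}\r$ uses consistency of (\ref{1}), which is assumed in the paper and in any case follows from (\ref{4}).
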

\begin{proof} 
Suppose that (\ref{cond}) is not satified and let $i_0 \in \{1, 2, \dots, m\}$ be such that $i_k \neq i_0$, for all $k \geq 0$. Then, (\ref{res:PHi}) yields that 
\begin{equation}
\label{eq0}
x^k = x^0 + {\displaystyle  \sum_{1 \leq i \leq m; i \neq i_0}{\alpha^k_i A_i}}, 
\end{equation}
with $\alpha^k_i \in \R$, hence 
\be
\label{eq1}
x^k \in x^0 + {\rm span}(A_i, i=1, \dots, m, i \neq i_0).
\ee
In \cite{a84}) the author proved that for consistent systems as (\ref{1}) (which holds in our case because of the assumption (\ref{4})) the sequence $(x^k)_{k \geq 0}$ generated with the MRK algorithm converges and 
\begin{equation}
\label{eq2}
\displaystyle \lim_{k \rightarrow \infty} x^k = P_{{\cal{N}}(A)}(x^0) + x_{LS}.
\end{equation}
Since the set $x^0 + {\rm span}(A_i, i=1, \dots, m, i \neq i_0)$ is closed, from (\ref{eq1})) it results that the limit vector in (\ref{eq2})) belongs to the same set, thus
$$
P_{{\cal{N}}(A)}(x^0) + x_{LS} - x^0 = x_{LS} - 
P_{\cR(A^T)}(x^0) \in {\rm span}(A_i, i=1, \dots, m, i \neq i_0).
$$
 This contradicts the hypothesis  (\ref{3-3pp}) and completes the proof.
\end{proof}
The above result tells us that, in the hypothesis (\ref{3-3pp}) the remotest set control is {\it a kind of  expanding} control (according to \cite{ccp11}). Regarding the possibility to fulfil this hypothesis we give the following result.
\begin{proposition}
\label{pnew}
Let 
\be
\label{3-0}
x^0 = \sum_{i=1}^m \beta_i A_i \in \cR(A^T)
\ee
 and suppose that 
\be
\label{3-1}
A_i \neq 0, ~A_{ij} \geq 0, \forall i, j ~{\rm and}~ \p x_{LS} \p ~\leq~ M,
\ee
for some $M \geq 0$. If the scalars $\beta_i$ satisfy
\be
\label{3-2}
\beta_i > \frac{M}{M_i}, ~M_i=\max_{1 \leq j \leq n}A_{ij} > 0, ~\forall i=1, \dots, m,
\ee
then 
\be
\label{3-3}
P_{\cR(A^T)}(x^0)-x_{LS} = x^0-x_{LS}= \sum_{i = 1}^m \gamma_i A_i, ~{\rm with}~ \gamma_i > 0, \forall i=1, \dots, m.
\ee
\end{proposition}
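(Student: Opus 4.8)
The plan is to prove the two assertions in (\ref{3-3}) in turn. The first equality is immediate and carries no real content: by (\ref{3-0}) we have $x^0 \in \cR(A^T)$, and since $\cR(A^T)$ is a linear subspace the projection $P_{\cR(A^T)}$ restricts to the identity on it, so $P_{\cR(A^T)}(x^0) = x^0$. The whole weight of the statement therefore rests on the second equality, that is, on exhibiting the difference $x^0 - x_{LS}$ as a combination of the rows $A_i$ with \emph{strictly positive} coefficients.

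For this, I would first note that both $x^0$ and $x_{LS}$ lie in $\cR(A^T) = \mathrm{sp}\{A_1, \dots, A_m\}$, the former by (\ref{3-0}) and the latter by (\ref{1-1}); hence their difference does too and can be written as $x^0 - x_{LS} = \sum_{i=1}^m \gamma_i A_i$. Fixing an expansion $x_{LS} = \sum_{i=1}^m \delta_i A_i$ (these coefficients being uniquely determined under the ambient full row rank assumption that accompanies this setting) and using the given $x^0 = \sum_i \beta_i A_i$, the coefficients satisfy $\gamma_i = \beta_i - \delta_i$. By hypothesis (\ref{3-2}) we have $\beta_i > M/M_i$, so it suffices to establish the bound $\delta_i \le M/M_i$ for every $i$: this immediately gives $\gamma_i = \beta_i - \delta_i > 0$, which is exactly the conclusion.

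The bound on $\delta_i$ is where the structural assumptions (\ref{3-1}) must do their work, and here the quantity $M_i$ is the natural tool. Concretely, I would pick an index $j_i$ with $A_{i j_i} = M_i$ and read off the $j_i$-th coordinate of $x_{LS}$, namely $(x_{LS})_{j_i} = \sum_k \delta_k A_{k j_i} = \delta_i M_i + \sum_{k \neq i} \delta_k A_{k j_i}$, so that $\delta_i M_i = (x_{LS})_{j_i} - \sum_{k \neq i} \delta_k A_{k j_i}$. Since $|(x_{LS})_{j_i}| \le \p x_{LS} \p \le M$, the desired estimate $\delta_i \le M/M_i$ would follow at once \emph{provided} the cross-term $\sum_{k \neq i} \delta_k A_{k j_i}$ is nonnegative, because the entries $A_{k j_i} \ge 0$ are controlled by (\ref{3-1}).

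The nonnegativity of that cross-term is the main obstacle, and it is really the crux of the proposition. Because the rows are in general not mutually orthogonal, an individual expansion coefficient $\delta_k$ is not $\l x_{LS}, A_k \r / \p A_k \p^2$ but is coupled to all the others through the Gram matrix $A A^T$, so its sign cannot be read off from a single inner product. The plan is thus to carry out this reconciliation last: one must show, using $A_{ij} \ge 0$ together with the positivity of the $\beta_i$ forced by (\ref{3-2}) (and, in the tomographic setting, the nonnegativity of $x_{LS}$ itself), that the coefficients $\delta_k$ — or at least the weighted sums $\sum_{k \neq i} \delta_k A_{k j_i}$ — are nonnegative. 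Once this sign control is secured, the two elementary ingredients above combine to give $\delta_i \le M/M_i < \beta_i$, hence $\gamma_i > 0$, completing the argument.
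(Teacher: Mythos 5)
Your proposal is not yet a proof: it correctly reduces the claim to the estimate $\delta_i \le M/M_i$ and correctly locates the difficulty in the sign of the cross-term $\sum_{k \neq i} \delta_k A_{k j_i}$, but it then leaves exactly that step open (``one must show\dots'', ``once this sign control is secured\dots''). Worse, the route you sketch for closing it --- proving that the coefficients $\delta_k$ of $x_{LS}$ in the row basis are nonnegative --- is not available: these coefficients can be negative even when all $A_{ij} \ge 0$ and $x_{LS}$ is itself componentwise nonnegative. For instance, with $A_1 = (1, 0.99)$ and $A_2 = (1,1)$ one has $x_{LS} = (1,0)^T = 100\,A_1 - 99\,A_2$ (this is the unique, hence minimal norm, solution of the corresponding consistent $2\times 2$ system with $b=(1,1)^T$), so $\delta_1 = 100$ and $\delta_2 = -99$ while $\Vert x_{LS} \Vert = 1$. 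Thus no general sign control on the $\delta_k$ exists, and the last step of your plan cannot be carried out as stated.

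The paper circumvents (rather than solves) this obstacle by a case distinction on the sign of each coefficient $\alpha_i$ (your $\delta_i$). If $\alpha_i \le 0$, nothing needs to be estimated: $\gamma_i = \beta_i - \alpha_i \ge \beta_i > 0$ already, since (\ref{3-2}) forces $\beta_i > 0$. The bound $\alpha_i \le M/M_i$ is derived only in the case where \emph{all} $\alpha_k$ are positive; there each component $x_j = \sum_k \alpha_k A_{kj}$ is a sum of nonnegative terms, so $\alpha_i A_{ij} \le x_j \le \Vert x_{LS} \Vert \le M$ for every $j$ --- precisely the cross-term nonnegativity you needed, obtained for free. Adding this case split would essentially turn your argument into the paper's. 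Be aware, however, that the difficulty you isolated is genuine: in the mixed case (some $\alpha_k > 0$, others $\le 0$) neither your argument nor the paper's Case 2 applies (the paper's reduction ``we may suppose $\alpha_i > 0$ for all $i$'' is not a legitimate one), and the example above in fact violates the conclusion: taking $M = 1$, $M_1 = M_2 = 1$ and $\beta_1 = \beta_2 = 2 > M/M_i$, the unique expansion of $x^0 - x_{LS}$ has $\gamma_1 = \beta_1 - \alpha_1 = 2 - 100 < 0$. So your instinct that the cross-term is the crux of the matter was correct; it is also the soft spot of the proposition itself.
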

\begin{proof}
Let
\be
\label{3-4}
x_{LS}=(x_1, \dots, x_n)^T=\sum_{i = 1}^m \alpha_i A_i.
\ee
We distinguish the following two cases.\\
{\bf Case 1.} Let $i_0 \in \{ 1, \dots, m \}$ be an index such that in (\ref{3-4}) $\alpha_{i_0} \leq 0$. Then, if we take $\beta_{i_0} > 0$ for the corresponding $\gamma_{i_0}$ in (\ref{3-3}) we obtain $\gamma_{i_0} > 0$, which fits into our conclusion.\\
{\bf Case 2.} According to {\bf Case 1} we may suppose that in (\ref{3-4}) we have
\be
\label{3-5}
\alpha_i > 0, \forall i =1, \dots, m.
\ee
From (\ref{3-4})  we get
$$
0 \leq x_j=\sum_{i=1}^m \alpha_i A_{ij}, \forall j=1, \dots, n,
$$
which gives us
$$
x_j=| x_j | \leq \sqrt{\sum_{q=1}^n x^2_q} = \p x_{LS} \p \leq M, \forall j=1, \dots, n,
$$
and therefore
\be
\label{3-6}
0 \leq \sum_{i=1}^m \alpha_i A_{ij} \leq M, \forall j=1, \dots, n.
\ee
If $i \in \{ 1, \dots, m \}$ is arbitrary fixed, from (\ref{3-6}) we obtain
\be
\label{3-7}
0 \leq  \alpha_{i} A_{ij} \leq M, \forall j=1, \dots, n.
\ee
Again because of our assumptions (\ref{3-1}) it results that it exists at least one index $j$ such that $A_{ij} > 0$, which tell us that
\be
\label{3-8}
M_i = \max_{1 \leq j \leq n} A_{ij} > 0.
\ee
From (\ref{3-6}) - (\ref{3-7}) we obtain that the coefficients $\alpha_i$ from (\ref{3-4}) should satisfy
\be
\label{3-9}
\alpha_i \leq \frac{M}{M_i}, \forall i=1, \dots, m
\ee
with $M_i$ defined in (\ref{3-8}). Hence, in order to get the conclusion (\ref{3-3}) we must take $\beta_i$ as in  (\ref{3-2}) and the proof is complete.
\end{proof}
\begin{remark}
\label{rem3-10}
If $A$ is a scanning matrix from ART in CT, the second assumption in (\ref{3-1}) is satisfied.  The first assumption is usually imposed for projection-based iterative methods. Anyway, it is not a restrictive condition because any zero row from $A$ can be eliminated from the begining without changing the solution set of (\ref{1}). The third assumption is also connected with the ART; indeed we usually have information about the components of the solutions $z=(z_1, \dots, z_n)^T  \in S(A; b)$ of the form $0 \leq z_j \leq C, \forall j=1, \dots, n$.  This gives us
$$
\p x_{LS} \p \leq \p z \p = \sqrt{\sum_{j=1}^n z^2_j} \leq \sqrt{n} C.
$$
\end{remark}

\end{document}